\newtheorem{theorem}{Theorem}[section]
\newtheorem{corollary}[theorem] {Corollary}
\newtheorem{definition}[theorem]{Definition}
\newtheorem{proposition}[theorem]{Proposition}
\newtheorem{remark}[theorem]{Remark}
\theoremstyle{questions}
\newtheorem*{questions}{Questions}
\def\det{{\rm det}}
\def\Tr{{\rm Tr}}
\def\sin{{\rm sin}}
\def\cos{{\rm cos}}
\def\tan{{\rm tan}}
\def\cosec{{\rm cosec}}
\def\sec{{\rm sec}}
\def\cot{{\rm cot}}
\def\Or{{\rm Or}}
\def\Sym{{\rm Sym}}
\def\PSym{{\rm PSym}}
\def\S{{\rm S}}
\def\Stab{{\rm Stab}}
\newcommand{\ncom}{\newcommand}
\ncom{\mylabel}[1]{{\rm (#1)}\label{#1}}
\ncom{\Hom}{{\textit{Hom}}}
\ncom{\eop}{{\hfill $\Box$}}
\begin{document}
\baselineskip=16pt

\setcounter{tocdepth}{1}

\title[Symmetric matrices of trace zero]{Dynamics of two by two symmetric matrices of trace zero}

\author{Arijit Mukherjee}
\address{Department of Mathematics, Indian Institute of Science Education and Research Tirupati, Karakambadi Rd,  opp. Sree Rama Engineering College, Rami Reddy Nagar, Mangalam, Tirupati, Andhra Pradesh - 517 507, India.}
\email{mukherjee90.arijit@gmail.com}

\begin{abstract}  
In this paper, we describe the entire structure of the vector space $\Sym_2^0$ of all symmetric matrices of size $2$ having trace zero.  This is motivated by the geometrical interpretation of any arbitrary element of $\Sym_2^0$.  We further study the orbits and stable sets of these elements.  As an application of the obtained structure of $\Sym_2^0$, we obtain the symmetric matrices of size $2$, trace of whose product with any trace zero symmetric matrix is zero.  Finally some well known trigonometric formulas are interpreted geometrically incorporating the anatomy of $\Sym_2^0$. 
\end{abstract}
 
\maketitle
\textbf{Keywords :} Orthogonal matrix, symmetric matrix, trace, inner product, trigonometry.

\textbf{2020 AMS Subject Classification :} 15A63, 15B10, 97G60, 97H60.



\section{Introduction}

The study of symmetric matrices and trace zero matrices attracted considerable attention.  Many mathematicians have researched on symmetric matrices to study SNIEP, symmetric non-negative inverse eigenvalue problem (cf. \cite{Fl},\cite{Lw-Lo},\cite{Lw-Mc} \& \cite{Mc-Ne}).  Also people have independently worked on trace zero matrices and found necessary and sufficient conditions for a matrix to have zero trace (cf. \cite{Al-Mh} \& \cite{Sh}).  A study of SNIEP for trace zero symmetric matrices can be found in \cite{Sp}.

This paper is devoted to the study of trace zero symmetric matrices of size $2$ and its applications.  But we do it in a different context and therefore follow a different approach altogether.  

We begin by fixing some notations which we are going to use repeatedly.  Let $\mathbb{R}$ be the field of real numbers.  In this paper, by a vector space we always mean a vector space over $\mathbb{R}$ and by a matrix we mean a matrix with real entries.  Let $\Sym_n$ \& $\mathcal{O}_n$ be the set of all symmetric matrices and orthogonal matrices of size $n$ respectively and $\Sym^0_n$ be the subset of $\Sym_n$ consisting of all matrices having trace zero. By $\PSym^0_n$ we denote the set of all $n\times n$ symmetric matrices, trace of whose product with any element of $\Sym^0_n$ is zero.  We reserve the notations $\mathbb{I}_n$ and $\mathbb{O}_n$ for identity matrix and zero matrix of size $n$ respectively.  We denote the trace of a given matrix $A$ by $\Tr(A)$ and determinant of $A$ by $\det(A)$.  Given any two matrices $A$ and $B$ of size $n$, we denote the matrix multiplication $A\cdot B$ simply by their juxtaposition $AB$.

In this paper, our main aim is to describe the precise structure of $\Sym^0_2$.  In Theorem \ref{structure of Sym^0_2}, we show that the elements of $\Sym^0_2$ are precisely of the form $\big(\begin{smallmatrix}
\lambda\; \cos(\theta) & \lambda\; \sin(\theta)\\
\lambda\; \sin(\theta) & -\lambda\; \cos(\theta)
\end{smallmatrix}\big)$ for some $\lambda \in \mathbb{R}, \theta \in [0,2\pi)$.  Using the structure of $\Sym^0_2$, we further show that the set of all trace zero symmetric matrices of size $2$ having eigenvalues $1$ and $-1$ is same as the set of all size $2$ orthogonal matrices having determinant $-1$, (cf. Corollary \ref{equality of some orthogonal matrices and some symmetric matrices of trace zero}).

We come up with the geometrical interpretation of the elements of $\Sym_2^0$ as well.  In fact, this is the motivation behind finding the anatomy of $\Sym_2^0$.  Moreover, we extensively discuss about the orbits,  raise the questions about the finiteness of those orbits and answer those using the obtained structure of $\Sym_2^0$.  We obtain a necessary and sufficient condition for finiteness of the orbit $\Or_{(a,b)}(T_{\theta}^{\lambda})$ of an arbitrary element $T_{\theta}^{\lambda}$ of $\Sym_2^0$ starting at a point $(a,b)$ (cf. Theorem \ref{orbit finiteness_NASC conditions}).

We also study the dynamics of $T^{\lambda}_{\theta}$ and show how the stable set $\Stab_{T_{\theta}^{\lambda}}((a,b))$ of any point $(a,b)$ with respect to $T^{\lambda}_{\theta}$ varies as $\lambda$ varies.  In the process, we obtain that the stable set $\Stab_{T_{\theta}^{\lambda}}((a,b))$ either contains only $(a,b)$ or is the whole of $\mathbb{R}^2$ depending on whether $\lambda$ lies in the open interval $(1,1)$ or not, (cf. Theorem \ref{stable set}).   

In section \ref{section 3}, we look upon a couple of applications of the structure of $\Sym_2^0$.  Firstly, we derive the structure of $\PSym_2^0$ in subsection \ref{subsection 3.1}.  To be precise, we show that $\PSym_2^0$ consists of scalar matrices and scalar matrices only, (cf. Theorem \ref{main theorem_special case}).  We then prove that the obtained anatomy of $\PSym_2^0$ can be generalised for any $n\geq 2$ using Frobenius inner product on $\Sym_n$ (cf. Theorem \ref{main theorem_with more prerequisite}).  

As another application, we talk about two rigid motions, namely rotation and reflection, of any point of the Euclidean plane and show that rotating the point of reflection of a given point with respect to a line is same as reflecting it with respect to some other line, (cf. Theorem \ref{reinterpreting some trigonometric formulas}).  Though this can be proved using simple techniques of Euclidean geometry, but we do it incorporating the structure of $\Sym_2^0$ and as a result the elucidation seems to be an elegant one.  The given proof can also be thought of as a geometric interpretation of couple of well known and frequently used trigonometric formulas (cf. Remark \ref{trigonometry to geometry}).

In Section \ref{section 4}, we conclude by indicating that the structure of some subsets of $\Sym^0_n$ can be obtained for $n>2$ as well by adapting the method used in Theorem \ref{structure of Sym^0_2}, provided some conditions being suitably put on the set of eigenvalues of its elements.

\section{On the structure of $\S\lowercase{ym}^0_2$ and orbits of its elements}\label{section 2} 
In this section, we provide the structure of $\Sym^0_2$, analyse its elements from a geometric viewpoint and then discuss upon the orbits and the stable sets of those elements.  

For any given real number $\theta$, denote the line in $\mathbb{R}^2$ passing through origin and making an angle $\theta$ with the positive direction of $x$-axis in the anticlockwise direction by $L_{\theta}$.  Given any $\lambda\in \mathbb{R}$, define a map (geometrically), denoted by $T_{\theta}^{\lambda}$, as follows : Given any point $(x_1,x_2)$, the map $T_{\theta}^{\lambda}$ first reflects the point $(x_1,x_2)$ with respect to the line $L_{\theta}$ and scales that by $\lambda$ followed by that.  Denoting the reflection of $(x_1,x_2)$ with respect to the line $L_{\theta}$ by $R_{\theta}((x_1,x_2))$, the map $T_{\theta}^{\lambda}$ can be given as follows:
\begin{equation}\label{geometric definition of elements of Sym_0^2}
\begin{split}
T_{\theta}^{\lambda}:\mathbb{R}^2&\rightarrow \mathbb{R}^2\\
(x_1,x_2)&\mapsto \lambda R_{\theta}((x_1,x_2)).
\end{split}
\end{equation}   

Define the orbit $\Or_{(a,b)}(T_{\theta}^{\lambda})$ of the map $T_{\theta}^{\lambda}$ starting at a point $(a,b)$ as follows:
\begin{equation*}
\Or_{(a,b)}(T_{\theta}^{\lambda}):=\{(T_{\theta}^{\lambda})^n((a,b))\mid n\in \mathbb{N}\}.
\end{equation*} 

We now ask the following questions:
\begin{questions}
\begin{enumerate}
\item Given any $(a,b)\in \mathbb{R}^2$, for what values of $\theta$ and $\lambda$,the orbit $\Or_{(a,b)}(T_{\theta}^{\lambda})$ is finite?
\item For what values of $\theta$ and $\lambda$, the orbit $\Or_{(a,b)}(T_{\theta}^{\lambda})$ is a singleton set, where $(a,b)\in \mathbb{R}^2$?
\item Given any $(a,b)\in \mathbb{R}^2$, for what values of $\theta$ and $\lambda$, the orbit $\Or_{(a,b)}(T_{\theta}^{\lambda})$ is a set having two points?
\item For what values of $\theta$ and $\lambda$,  $(T_{\theta}^{\lambda})^n=\mathbb{I}_2$ for some positive integer $n$?
\item Given any $(a,b)\in \mathbb{R}^2$, the sequence $\{(T_{\theta}^{\lambda})^n((a,b))\}_{n\geq 0}$ is convergent in usual topology and in discrete topology for what values of $\theta$ and $\lambda$?
\end{enumerate}
\end{questions}

To answer these questions, we first calculate $T_{\theta}^{\lambda}(P_0)$ for any given point $P_0\in \mathbb{R}^2$.  We do the calculation assuming that both the coordinates of the point $P_0=(x_0,y_0)$ are positive and the line joining $P_0$ and origin makes an angle $\alpha$ with the positive direction of $x$-axis.  That is to say, $P_0$ lies in the line $L_{\alpha}$.  Moreover, we assume that $0<\alpha\leq \theta\leq 2\theta-\alpha<\tfrac{\pi}{2}$, that is to say the lines $L_{\alpha}$, $L_{\theta}$ and $L_{2\theta-\alpha}$ are having slopes in non-decreasing order, lies in first and third quadrant and none of those are $x$-axis or $y$-axis.  The calculations are very much similar in the remaining cases as well.

As we mentioned, we start with a point $P_0=(x_0,y_0)$ lying in the line $L_{\alpha}$.  We first want to determine the reflection of $P_0$ with respect to the line $L_{\theta}$.  For that we draw a perpendicular from the point $P_0$ to the line $L_{\theta}$ and denote the point of intersection of this with $L_{\theta}$ by $A$.  Then we extend the line segment $\overline{P_0A}$ and suppose that intersects the line $L_{2\theta-\alpha}$ at the point $P_1=(x_1,y_1)$.  Then clearly the point $P_1$ is the reflection of $P_0$.  Now we determine $x_1$ and $y_1$ in terms of the known quantities $x_0$, $y_0$, $\theta$ and $\alpha$ using the properties of reflection.

\begin{figure}
\centering
\includegraphics[scale=.45]{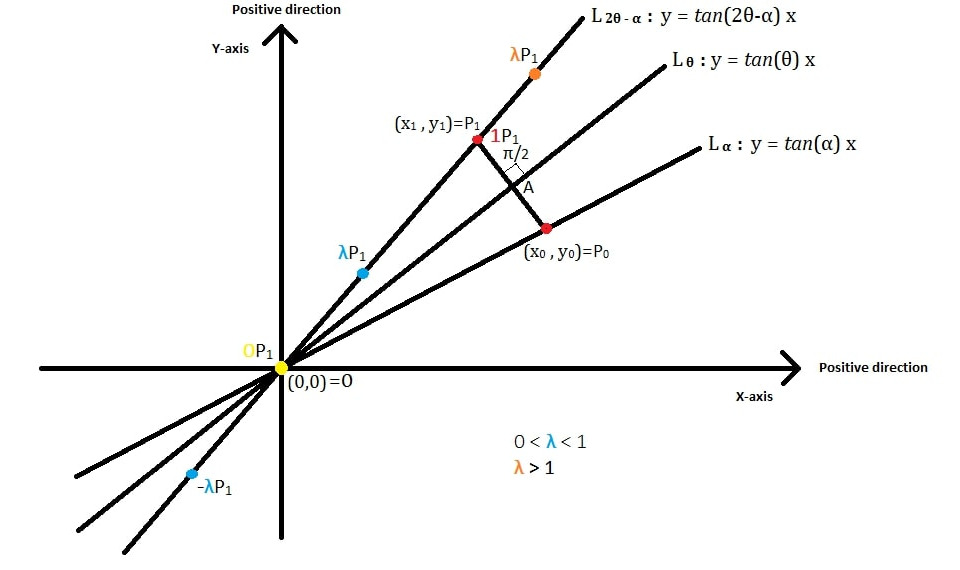}
\caption{Image of a point $P_0$ under the map $\lambda R_{\theta}$ for some values of $\lambda$} \label{sample-figure_1}
\end{figure}

The equations of the line $L_{\theta}$, $L_{\alpha}$ and $L_{2\theta-\alpha}$ are given as follows:
\begin{equation*}
L_{\theta} : y=\tan(\theta)\; x,\; L_{\alpha} : y=\tan(\alpha)\; x,\; L_{2\theta-\alpha} : y=\tan(2\theta-\alpha)\; x.
\end{equation*}
By properties of reflection, we get that the triangle $AP_0O$ and $AP_1O$ are congruent to each other.  That is to say, $\bigtriangleup AP_0O\cong \bigtriangleup AP_1O$.  Therefore, the length $\mathcal{L}(\overline{OP_0})$ of the line segment $\overline{OP_0}$ is equal to that of $\overline{OP_1}$, that is to say, $\mathcal{L}(\overline{OP_0})=\mathcal{L}(\overline{OP_1})$. We then have the following implications.
\begin{equation*}
\begin{split}
\mathcal{L}(\overline{OP_0})&=\mathcal{L}(\overline{OP_1})\Rightarrow \sqrt{x_0^2+y_0^2}=\sqrt{x_1^2+y_1^2}\\&\Rightarrow \sqrt{x_0^2+x_0^2\;\tan^2(\alpha)}=\sqrt{x_1^2+x_1^2\;\tan^2(2\theta-\alpha)}\\&\Rightarrow x_0\;\sec(\alpha)=x_1\;\sec(2\theta-\alpha)\Rightarrow x_1=x_0\;\sec(\alpha)\cos(2\theta-\alpha).
\end{split}
\end{equation*}
So the abscissa $x_1$ of the point of reflection $P_1$ of $P_0$ is given as follows:
\begin{equation}\label{abscissa of reflection}
x_1=x_0\;\sec(\alpha)\cos(2\theta-\alpha).
\end{equation}
As $(x_1,y_1)$ lies in the line $L_{2\theta-\alpha}$, we have $y_1=\tan(2\theta-\alpha)\; x_1$.  We now have the following implications:
\begin{equation*}
\begin{split}
y_1=\tan(2\theta-\alpha)\; x_1&\Rightarrow y_1=x_0\;\sec(\alpha)\cos(2\theta-\alpha)\tan(2\theta-\alpha) \;\;(\text{by}\; \eqref{abscissa of reflection})\\
&\Rightarrow y_1=x_0\;\sec(\alpha)\sin(2\theta-\alpha)\\
&\Rightarrow y_1=y_0\;\cosec(\alpha)\sin(2\theta-\alpha)\;\;( \text{as}\; (x_0,y_0)\;\text{lies in}\; L_{\alpha}).
\end{split}
\end{equation*}
So the ordinate $y_1$ of the point of reflection $P_1$ of $P_0$ is given as follows:
\begin{equation}\label{ordinate of reflection}
y_1=y_0\;\cosec(\alpha)\sin(2\theta-\alpha).
\end{equation}

\begin{remark}\label{points on line of reflection are invariant under reflection}
Taking $\theta=\alpha$ in equations \eqref{abscissa of reflection} and \eqref{ordinate of reflection}, we have $P_0=(x_0,y_0)=(x_1,y_1)=P_1$.  On the other hand, if $\theta\neq\alpha$ and $P_0\neq (0,0)$, then $P_0\neq P_1$ as $P_0\in L_{\alpha}$ and $P_1\in L_{2\theta-\alpha}$.  This can be justified also using \eqref{abscissa of reflection} and \eqref{ordinate of reflection}.  This depicts the fact that reflection of any point located on the axis of the reflection is the point itself.  Moreover, this is true exclusively for points located on the line of reflection.
\end{remark}

So we have obtained the coordinates of the point of reflection of $(x_0,y_0)$ about the line $L_{\theta}$.  For the moment, we take a break from the discussion about the map $T^{\lambda}_{\theta}$.  We continue with the same  after a while and answer the questions raised.

We now provide the description of $\Sym_2^0$.  Towards that, we have the following proposition which says about the structure of the set $\mathcal{O}_2$ of orthogonal matrices of size $2$.  Though this is a standard result (for example, see \cite[p. 348]{He} for determinant $1$ orthogonal matrices), we include this over here for the sake of continuity.

\begin{proposition}\label{structure of orthogonal matrices of size 2}
The collection $\mathcal{O}_2$ of all orthogonal matrices of size $2$ is given as follows:
\begin{equation*}
\mathcal{O}_2= \lbrace \big(\begin{smallmatrix}
 \cos(\alpha) &  \sin(\alpha)\\
 -\sin(\alpha) &  \cos(\alpha)
\end{smallmatrix}\big)\mid  \alpha \in [0,2\pi) \rbrace \cup \lbrace \big(\begin{smallmatrix}
 \cos(\beta) &  \sin(\beta)\\
 \sin(\beta) &  -\cos(\beta)
\end{smallmatrix}\big)\mid  \beta \in [0,2\pi) \rbrace.
\end{equation*}
\end{proposition}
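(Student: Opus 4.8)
The plan is to prove the two set inclusions separately, starting with the easy one. For the inclusion $\supseteq$, I would take each matrix in the two displayed families and verify directly that it is orthogonal, i.e.\ that $A^{T}A=\mathbb{I}_2$; in both cases the two rows form an orthonormal pair by the single identity $\cos^2(\theta)+\sin^2(\theta)=1$, so every listed matrix indeed lies in $\mathcal{O}_2$. This direction requires no more than a one-line computation for each family.

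For the inclusion $\subseteq$, I would start from an arbitrary $A=\big(\begin{smallmatrix} a & b \\ c & d\end{smallmatrix}\big)\in\mathcal{O}_2$ and use that orthogonality is equivalent to the rows of $A$ forming an orthonormal system, namely $a^2+b^2=1$, $c^2+d^2=1$ and $ac+bd=0$. Since $(a,b)$ is a unit vector in $\mathbb{R}^2$, there is a unique $\theta\in[0,2\pi)$ with $a=\cos(\theta)$ and $b=\sin(\theta)$; this is exactly what makes the parameter range over $[0,2\pi)$ in the statement. It then remains to pin down the second row $(c,d)$.

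The key step, and the only place where $n=2$ is genuinely used, is the determination of $(c,d)$. The relation $ac+bd=0$ says that $(c,d)$ is orthogonal to the nonzero vector $(a,b)$, and in $\mathbb{R}^2$ the space of such vectors is one-dimensional, spanned by $(-b,a)$; hence $(c,d)=t\,(-b,a)$ for some $t\in\mathbb{R}$. Substituting into $c^2+d^2=1$ forces $t^2(a^2+b^2)=t^2=1$, so $t=\pm1$. Choosing $t=1$ reproduces the first family $\big(\begin{smallmatrix}\cos(\theta) & \sin(\theta) \\ -\sin(\theta) & \cos(\theta)\end{smallmatrix}\big)$ with $\alpha=\theta$, while $t=-1$ reproduces the second family $\big(\begin{smallmatrix}\cos(\theta) & \sin(\theta) \\ \sin(\theta) & -\cos(\theta)\end{smallmatrix}\big)$ with $\beta=\theta$, which completes the argument. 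As a consistency check I would note that the first family has $\det=+1$ and the second has $\det=-1$, so the two families are in fact disjoint, matching the well-known splitting of $\mathcal{O}_2$ into rotations and reflections. Since the result is entirely standard, I do not anticipate any real obstacle beyond the bookkeeping of matching the $\pm1$ dichotomy to the precise matrix shapes displayed in the statement.
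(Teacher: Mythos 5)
Your proof is correct and takes essentially the same route as the paper: both characterize orthogonality by an orthonormal pair of rows/columns, write the first as $(\cos(\theta),\sin(\theta))$ for a unique $\theta\in[0,2\pi)$, and split into the two families according to a sign (your $t=\pm 1$ is exactly $\det(A)$, which is the paper's case division). If anything your version is more complete, since the paper merely asserts that its three conditions imply the displayed forms, while you derive the second row explicitly from the one-dimensionality of the orthogonal complement of $(a,b)$ in $\mathbb{R}^2$.
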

\begin{proof}
Given $x=(x_1,x_2)$ and $y=(y_1,y_2)$, by $\langle x, y\rangle$ we denote the standard inner product of $x$ and $y$ in $\mathbb{R}^2$.  That is to say, $\langle x, y\rangle=x_1y_1+x_2y_2$.  Let $M=\big(\begin{smallmatrix}
p & q\\
r & s
\end{smallmatrix}\big)$ be a given orthogonal matrix.  As the columns of $M$ are of unit norm with respect to the standard inner product of $\mathbb{R}^2$, we have 
\begin{equation}\label{orthogonal matrix_1}
p^2+r^2=1, q^2+s^2=1.
\end{equation}
Then \eqref{orthogonal matrix_1} in turn imply that 
\begin{equation}\label{orthogonal matrix_2}
\mid p \mid \leq 1, \;\mid r \mid \leq 1,\;\mid q \mid \leq 1,\;\mid s \mid \leq 1.
\end{equation}
As the columns of $M$ are orthogonal with respect to the standard inner product of $\mathbb{R}^2$, we have 
\begin{equation}\label{orthogonal matrix_3}
pq+rs=0.
\end{equation}
\textit{Case - 1}:\\
If $\det(A)=1$, then we moreover have 
\begin{equation}\label{orthogonal matrix_4a}
ps-rq=1.
\end{equation}
So, conditions \eqref{orthogonal matrix_2}, \eqref{orthogonal matrix_3} and \eqref{orthogonal matrix_4a} imply that, there exists $\alpha\in[0,2\pi)$ such that 
\begin{equation}\label{orthogonal matrix structure_1}
M=\begin{pmatrix}
p & q\\
r & s
\end{pmatrix}=\begin{pmatrix}
\cos(\alpha) & \sin(\alpha)\\
-\sin(\alpha) & \cos(\alpha)
\end{pmatrix}.
\end{equation}
\textit{Case - 2}:\\
If $\det(A)=-1$, then we moreover have 
\begin{equation}\label{orthogonal matrix_4b}
ps-rq=-1.
\end{equation}
So, conditions \eqref{orthogonal matrix_2}, \eqref{orthogonal matrix_3} and \eqref{orthogonal matrix_4b} imply that, there exists $\beta\in[0,2\pi)$ such that 
\begin{equation}\label{orthogonal matrix structure_2}
M=\begin{pmatrix}
p & q\\
r & s
\end{pmatrix}=\begin{pmatrix}
\cos(\beta) & \sin(\beta)\\
\sin(\beta) & -\cos(\beta)
\end{pmatrix}.
\end{equation}
Therefore, we have the result from \eqref{orthogonal matrix structure_1} and \eqref{orthogonal matrix structure_2}.
\end{proof}

Following theorem describes the entire structure of $\Sym^0_2$.
\begin{theorem}\label{structure of Sym^0_2}
The collection $\Sym^0_2$ of all trace zero symmetric matrices of size $2$ is given as follows:
\begin{equation*}
\Sym^0_2=\lbrace \big(\begin{smallmatrix}
\lambda\; \cos(\theta) & \lambda\; \sin(\theta)\\
\lambda\; \sin(\theta) & -\lambda\; \cos(\theta)
\end{smallmatrix}\big)\mid \lambda \in \mathbb{R}, \theta \in [0,2\pi) \rbrace.
\end{equation*}
\end{theorem}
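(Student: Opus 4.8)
The plan is to prove the two set inclusions separately, reserving the substantive work for the containment $\Sym^0_2 \subseteq \{\cdots\}$ and leaning on Proposition \ref{structure of orthogonal matrices of size 2} so as not to redo a polar-coordinate computation from scratch.

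First I would dispose of the easy inclusion. Any matrix of the displayed form $\big(\begin{smallmatrix} \lambda\cos(\theta) & \lambda\sin(\theta) \\ \lambda\sin(\theta) & -\lambda\cos(\theta) \end{smallmatrix}\big)$ is visibly symmetric, and its trace is $\lambda\cos(\theta) + (-\lambda\cos(\theta)) = 0$, so it lies in $\Sym^0_2$. This settles $\supseteq$ in one line.

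For the reverse inclusion, I would start from an arbitrary $S \in \Sym^0_2$. Symmetry forces $S = \big(\begin{smallmatrix} a & b \\ b & c \end{smallmatrix}\big)$, and the trace-zero condition gives $c = -a$, so $S = \big(\begin{smallmatrix} a & b \\ b & -a \end{smallmatrix}\big)$ for some $a,b \in \mathbb{R}$. If $S = \mathbb{O}_2$, then $\lambda = 0$ together with any $\theta \in [0,2\pi)$ already exhibits $S$ in the required form, so I may assume $(a,b) \neq (0,0)$ and set $\lambda := \sqrt{a^2 + b^2} > 0$. The key observation is that $\tfrac{1}{\lambda} S$ is an orthogonal matrix of determinant $-1$: its columns have unit norm since $a^2 + b^2 = \lambda^2$, they are orthogonal because $ab + b(-a) = 0$, and $\det(\tfrac{1}{\lambda}S) = \tfrac{1}{\lambda^2}(-a^2 - b^2) = -1$. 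Invoking Case $2$ of Proposition \ref{structure of orthogonal matrices of size 2} then furnishes a $\theta \in [0,2\pi)$ with $\tfrac{1}{\lambda}S = \big(\begin{smallmatrix} \cos(\theta) & \sin(\theta) \\ \sin(\theta) & -\cos(\theta) \end{smallmatrix}\big)$, whence $S = \lambda\big(\begin{smallmatrix} \cos(\theta) & \sin(\theta) \\ \sin(\theta) & -\cos(\theta) \end{smallmatrix}\big)$, exactly the asserted form.

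There is no serious obstacle here; the content of the theorem is precisely the recognition that a nonzero trace-zero symmetric $2 \times 2$ matrix is a positive scalar multiple of a reflection. The only points demanding care are the separate treatment of $S = \mathbb{O}_2$ (where $\lambda = 0$ absorbs the ambiguity in $\theta$), and the remark that restricting to $\lambda > 0$ loses no generality even though the statement permits all $\lambda \in \mathbb{R}$: since $\theta$ ranges over the full circle $[0,2\pi)$, passing from $\theta$ to $\theta + \pi$ (interpreted modulo $2\pi$) flips the overall sign of the matrix, so a negative scalar never produces anything outside the parametrized family.
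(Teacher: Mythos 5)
Your proof is correct, but it takes a genuinely different route from the paper's. The paper proves the hard inclusion via the spectral theorem: it orthogonally diagonalises an arbitrary $A\in\Sym^0_2$ as $A=A_O^{-1}\big(\begin{smallmatrix}\gamma & 0\\ 0 & -\gamma\end{smallmatrix}\big)A_O$, splits into cases according to $\det(A_O)=\pm 1$, substitutes the explicit form of $A_O$ from Proposition \ref{structure of orthogonal matrices of size 2}, multiplies out the three matrices, and applies double-angle identities to land on $\big(\begin{smallmatrix}\gamma\cos(2\alpha) & \gamma\sin(2\alpha)\\ \gamma\sin(2\alpha) & -\gamma\cos(2\alpha)\end{smallmatrix}\big)$, finally reducing $\theta=2\alpha$ modulo $2\pi$. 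You instead write $S=\big(\begin{smallmatrix}a & b\\ b & -a\end{smallmatrix}\big)$ directly from the symmetry and trace conditions, observe that for $S\neq\mathbb{O}_2$ the normalisation $\tfrac{1}{\lambda}S$ with $\lambda=\sqrt{a^2+b^2}$ is orthogonal of determinant $-1$, and apply the same Proposition to $\tfrac{1}{\lambda}S$ itself rather than to the diagonalising matrix. Your argument is shorter and more elementary: it needs no spectral theorem, no case split on $\det(A_O)$, no trigonometric simplification, and no modular bookkeeping on the angle. It also isolates the conceptual content cleanly (a nonzero element of $\Sym^0_2$ is a positive multiple of a reflection), which is exactly the fact the paper later exploits in Corollary \ref{equality of some orthogonal matrices and some symmetric matrices of trace zero} and in the identification $T^{\lambda}_{\theta}=\lambda\mathcal{R}_{\theta}$. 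What the paper's longer route buys is the explicit link between the parameter $\theta$ and the eigenvector frame of $A$ (the angle of the diagonalising rotation), which feeds its geometric discussion and the generalisation to $n>2$ sketched in Section \ref{section 4}; your proof does not exhibit that link, but the theorem as stated does not require it. Your closing remarks on the zero matrix and on the redundancy of negative $\lambda$ are both correct and handled carefully.
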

\begin{proof}
By spectral theorem, we have that any real symmetric matrix is orthogonally diagonalisable and vice versa (cf. \cite[p.347]{He}).  That is, for $n=2$, given any symmetric matrix $A=\big(\begin{smallmatrix}
a & b\\
b & c
\end{smallmatrix}\big)$, there exists an orthogonal matrix $A_O$ such that \begin{equation}\label{towards the structure of Sym^0_2}
A_O  \big(\begin{smallmatrix}
a & b\\
b & c
\end{smallmatrix}\big) {A_O}^{-1}= \big(\begin{smallmatrix}
\gamma & 0\\
0 & \delta
\end{smallmatrix}\big).
\end{equation}
Moreover, if $A\in \Sym^0_2$ then sum of eigenvalues of $A$ is zero, that is, $\gamma+\delta=0$. \\
\textit{Case - 1}:\\
If $\det(A_O)=1$, then by Proposition \ref{structure of orthogonal matrices of size 2} there exists $\alpha\in [0,2\pi)$ such that $A_O=\big(\begin{smallmatrix}
\cos(\alpha) & \sin(\alpha)\\
-\sin(\alpha) & \cos(\alpha)
\end{smallmatrix}\big)$.  So, we also have ${A_O}^{-1}=\big(\begin{smallmatrix}
\cos(\alpha) & -\sin(\alpha)\\
\sin(\alpha) & \cos(\alpha)
\end{smallmatrix}\big)$.
Therefore, from \eqref{towards the structure of Sym^0_2} we have the following:
\begin{equation*}
\begin{split}
A=\begin{pmatrix}
a & b\\
b & c
\end{pmatrix}&=\begin{pmatrix}
\cos(\alpha) & -\sin(\alpha)\\
\sin(\alpha) & \cos(\alpha)
\end{pmatrix} \begin{pmatrix}
\gamma & 0\\
0 & -\gamma
\end{pmatrix}  \begin{pmatrix}
\cos(\alpha) & \sin(\alpha)\\
-\sin(\alpha) & \cos(\alpha)
\end{pmatrix}\\
&=\begin{pmatrix}
\gamma\;\cos^2(\alpha)-\gamma\;\sin^2(\alpha) & 2\gamma\; \cos(\alpha) \sin(\alpha)\\
2\gamma\; \cos(\alpha) \sin(\alpha) & \gamma\;\sin^2(\alpha)-\gamma\;\cos^2(\alpha)
\end{pmatrix}\\
&=\begin{pmatrix}
\gamma\;\cos (2\alpha) & \gamma\;\sin (2\alpha)\\
\gamma\;\sin (2\alpha) & -\gamma\;\cos (2\alpha)
\end{pmatrix}.
\end{split}
\end{equation*}
\textit{Case - 2}:\\
If $\det(A_O)=-1$, then by Proposition \ref{structure of orthogonal matrices of size 2} there exists $\beta\in [0,2\pi)$ such that $A_O=\big(\begin{smallmatrix}
\cos(\beta) & \sin(\beta)\\
\sin(\beta) & -\cos(\beta)
\end{smallmatrix}\big)$.  So, we also have ${A_O}^{-1}=A_O=\big(\begin{smallmatrix}
\cos(\beta) & \sin(\beta)\\
\sin(\beta) & -\cos(\beta)
\end{smallmatrix}\big)$.
Therefore, from \eqref{towards the structure of Sym^0_2} we have the following:
\begin{equation*}
\begin{split}
A=\begin{pmatrix}
a & b\\
b & c
\end{pmatrix}&=\begin{pmatrix}
\cos(\beta) & \sin(\beta)\\
\sin(\beta) & -\cos(\beta)
\end{pmatrix} \begin{pmatrix}
\gamma & 0\\
0 & -\gamma
\end{pmatrix}  \begin{pmatrix}
\cos(\beta) & \sin(\beta)\\
\sin(\beta) & -\cos(\beta)
\end{pmatrix}\\
&=\begin{pmatrix}
\gamma\;\cos^2(\beta)-\gamma\;\sin^2(\beta) & 2\gamma\; \cos(\beta) \sin(\beta)\\
2\gamma\; \cos(\beta) \sin(\beta) & \gamma\;\sin^2(\beta)-\gamma\;\cos^2(\beta)
\end{pmatrix}\\
&=\begin{pmatrix}
\gamma\;\cos (2\beta) & \gamma\;\sin (2\beta)\\
\gamma\;\sin (2\beta) & -\gamma\;\cos (2\beta)
\end{pmatrix}.
\end{split}
\end{equation*}
By taking $\theta=2\alpha$ if $\alpha<\pi$ \& $\theta=2(\alpha-\pi)$ if $\alpha\geq \pi$ in Case - 1 and $\theta=2\beta$ if $\beta<\pi$ \& $\theta=2(\beta-\pi)$ if $\beta\geq \pi$ in Case - 2, we have the theorem.
\end{proof}
We get the following obvious conclusion relating orthogonal matrices of size $2$ having determinant $-1$ and a subset of $\Sym_2^0$.
\begin{corollary}\label{equality of some orthogonal matrices and some symmetric matrices of trace zero}
Any $2\times 2$ orthogonal matrix $A$ of determinant $-1$ is a symmetric matrix of trace zero.  Moreover, given any  $2\times 2$ symmetric matrix $A$ of trace zero, there exists a $2\times 2$ orthogonal matrix $B$ of determinant $-1$ and a scalar $\lambda$ such that $A=\lambda B$.  Furthermore, the set of all trace zero symmetric matrices of size $2$ having eigenvalues $1$ and $-1$ is same as the set of all size $2$ orthogonal matrices having determinant $-1$.
\end{corollary}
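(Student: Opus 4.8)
The plan is to read off all three assertions directly from the explicit parametrisations already in hand, namely Proposition~\ref{structure of orthogonal matrices of size 2} and Theorem~\ref{structure of Sym^0_2}; no genuinely new computation is needed beyond extracting eigenvalues. For the first assertion I would take $A\in\mathcal{O}_2$ with $\det(A)=-1$ and apply Case~2 of Proposition~\ref{structure of orthogonal matrices of size 2} to write $A=\big(\begin{smallmatrix}\cos(\beta)&\sin(\beta)\\\sin(\beta)&-\cos(\beta)\end{smallmatrix}\big)$ for some $\beta\in[0,2\pi)$. Both off-diagonal entries equal $\sin(\beta)$, so $A$ is symmetric, and $\Tr(A)=\cos(\beta)-\cos(\beta)=0$; hence $A\in\Sym^0_2$.

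For the second assertion, given $A\in\Sym^0_2$, Theorem~\ref{structure of Sym^0_2} presents it as $A=\big(\begin{smallmatrix}\lambda\cos(\theta)&\lambda\sin(\theta)\\\lambda\sin(\theta)&-\lambda\cos(\theta)\end{smallmatrix}\big)$. Factoring out $\lambda$ exhibits $A=\lambda B$, where $B=\big(\begin{smallmatrix}\cos(\theta)&\sin(\theta)\\\sin(\theta)&-\cos(\theta)\end{smallmatrix}\big)$ is orthogonal with $\det(B)=-\cos^2(\theta)-\sin^2(\theta)=-1$, which is exactly the required conclusion.

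For the final set equality I would establish both inclusions. The characteristic polynomial of the generic element $\big(\begin{smallmatrix}\lambda\cos(\theta)&\lambda\sin(\theta)\\\lambda\sin(\theta)&-\lambda\cos(\theta)\end{smallmatrix}\big)$ is $x^2-\lambda^2$, so its eigenvalues are $\pm\lambda$. Thus a trace zero symmetric matrix has eigenvalues $1$ and $-1$ precisely when $\lambda=\pm1$, and in that case $A=\lambda B$ is itself orthogonal (scaling an orthogonal matrix by $\pm1$ preserves orthogonality) with $\det(A)=\lambda^2\det(B)=-1$; this gives the inclusion of the first set into the second. Conversely, by the first assertion any orthogonal matrix of determinant $-1$ already lies in $\Sym^0_2$, and its parametrisation forces $\lambda=1$, so its eigenvalues are $\pm1$ and it lies in the first set.

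There is no serious obstacle here; the one point requiring a little care is the branch $\lambda=-1$ in the set equality. Because the eigenvalue condition only constrains $|\lambda|$, one must not simply assume $\lambda=1$, but verify explicitly that $-B$ remains orthogonal of determinant $-1$, so that both sign choices of $\lambda$ land inside the set of orthogonal matrices of determinant $-1$.
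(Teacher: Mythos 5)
Your proposal is correct and is exactly the argument the paper intends: the paper's proof is the one-line statement that the corollary ``follows directly from Proposition \ref{structure of orthogonal matrices of size 2} and Theorem \ref{structure of Sym^0_2}'', and your write-up simply fills in those details (including the correct computation that the eigenvalues are $\pm\lambda$ and the careful handling of the branch $\lambda=-1$). No difference in approach, and no gaps.
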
 
\begin{proof}
Follows directly from Proposition \ref{structure of orthogonal matrices of size 2} and Theorem \ref{structure of Sym^0_2}.
\end{proof} 
We now resume the discussion about the map $T_{\theta}^{\lambda}$.  We did some calculations and obtained the coordinates of the point of a given point with respect to the line $L_{\theta}$.  Look at the same calculations from a different point of view.  Consider the matrix $\mathcal{R}_{\theta}=\big(\begin{smallmatrix}
 \cos(2\theta) &  \sin(2\theta)\\
 \sin(2\theta) &  -\cos(2\theta)
\end{smallmatrix}\big)$.  Then 
\begin{equation}\label{reflection_in terms of matrix}
\begin{pmatrix}
 \cos(2\theta) &  \sin(2\theta)\\
 \sin(2\theta) &  -\cos(2\theta)
\end{pmatrix}\begin{pmatrix}
x_0\\y_0
\end{pmatrix}=\begin{pmatrix}
x_0\;\cos(2\theta)+y_0\;\sin(2\theta)\\
x_0\;\sin(2\theta)-y_0\;\cos(2\theta)
\end{pmatrix}.
\end{equation}
Now,
\begin{equation}\label{abscissa_in terms of matrix}
\begin{split}
x_0\;\cos(2\theta)+y_0\;\sin(2\theta)=&x_0\;\cos(2\theta)+x_0\;\tan(\alpha)\sin(2\theta)(\text{as}\; (x_0,y_0)\;\text{lies on}\; L_{\alpha})\\
&=x_0\;\sec(\alpha)(\cos(\alpha)\cos(2\theta)+\sin(\alpha)\sin(2\theta))\\
&=x_0\;\sec(\alpha)\cos(2\theta-\alpha)\\
&=x_1 \;\; (\text{by}\;\eqref{abscissa of reflection}).
\end{split}
\end{equation}
Similarly, we have
\begin{equation}\label{ordinate_in terms of matrix}
\begin{split}
x_0\;\sin(2\theta)-y_0\;\cos(2\theta)&=y_0\;\cot(\alpha)\sin(2\theta)-y_0\;\cos(2\theta)(\text{as}\; (x_0,y_0)\;\text{lies on}\; L_{\alpha})\\
&=y_o\;\cosec(\alpha)(\cos(\alpha)\sin(2\theta)-\cos(2\theta)\sin(\alpha))\\
&=y_o\;\cosec(\alpha)\sin(2\theta-\alpha)\\
&=y_1 \;\; (\text{by}\;\eqref{ordinate of reflection}).
\end{split}
\end{equation}
Therefore by \eqref{reflection_in terms of matrix}, \eqref{abscissa_in terms of matrix} and \eqref{ordinate_in terms of matrix}, we have
\begin{equation}\label{reflection_as elements of Sym_2^0}
\mathcal{R}_{\theta}\begin{pmatrix}
x_0\\y_0
\end{pmatrix}=\begin{pmatrix}
 \cos(2\theta) &  \sin(2\theta)\\
 \sin(2\theta) &  -\cos(2\theta)
\end{pmatrix}\begin{pmatrix}
x_0\\y_0
\end{pmatrix}=\begin{pmatrix}
x_1\\y_1
\end{pmatrix}.
\end{equation}
Recall the definition of the map $T_{\theta}^{\lambda}$ as defined in \eqref{geometric definition of elements of Sym_0^2}.  Now treating points of $\mathbb{R}^2$ as column vectors, from \eqref{reflection_as elements of Sym_2^0} we can conclude that 
\begin{equation*}
T_{\theta}^{1}=R_{\theta}=\mathcal{R}_{\theta}.
\end{equation*} 
Moreover,
\begin{equation}\label{finally_geometry of Sym_2^0}
T_{\theta}^{\lambda}=\lambda R_{\theta}=\lambda\mathcal{R}_{\theta}.
\end{equation} 
Thus, \eqref{finally_geometry of Sym_2^0} provides the geometry of the elements of $\Sym_0^2$.  Also from now on we simply use the notation $R$ to mean both $R_{\theta}$ and $\mathcal{R}_{\theta}$ if there is there is no confusion regarding the line of reflection $L_{\theta}$.  So, $\lambda R$ and $T_{\theta}^{\lambda}$ denote the same map and we use them interchangeably.   

We now answer the questions we posed related to the map $T_{\theta}^{\lambda}$.  Towards that, we have the following proposition. 

\begin{proposition}\label{R^n is identity conditions}
Let $n$ be any positive integer.  Then 
$(\lambda R)^n=\mathbb{I}_2$ if and only if $n$ is even and $\lambda=1$ or $-1$.
\end{proposition}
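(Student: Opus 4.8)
The plan is to exploit the fact that $R=\mathcal{R}_{\theta}$ is a reflection and is therefore an involution. First I would compute $R^2$ directly from the explicit form $R=\big(\begin{smallmatrix}\cos(2\theta)&\sin(2\theta)\\ \sin(2\theta)&-\cos(2\theta)\end{smallmatrix}\big)$: both diagonal entries of $R^2$ reduce to $\cos^2(2\theta)+\sin^2(2\theta)=1$ by the Pythagorean identity, while the off-diagonal entries $\cos(2\theta)\sin(2\theta)-\sin(2\theta)\cos(2\theta)$ vanish. Hence $R^2=\mathbb{I}_2$. Since $\lambda$ is a scalar it commutes with every matrix, so $(\lambda R)^n=\lambda^n R^n$, and the involution property immediately gives $R^n=\mathbb{I}_2$ when $n$ is even and $R^n=R$ when $n$ is odd. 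This single reduction carries essentially all the weight of the proof.

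With this in hand the two directions follow quickly. For the ``if'' direction, suppose $n$ is even and $\lambda\in\{1,-1\}$; then $(\lambda R)^n=\lambda^n\mathbb{I}_2=\mathbb{I}_2$, since any even power of $\pm1$ equals $1$. For the ``only if'' direction I would argue on the parity of $n$. If $n$ were odd, then $(\lambda R)^n=\lambda^n R$, and for this to equal $\mathbb{I}_2$ one would need $R$ to be the scalar matrix $\lambda^{-n}\mathbb{I}_2$; but $R=\mathcal{R}_{\theta}$ is never scalar, because that would require simultaneously $\sin(2\theta)=0$ from the off-diagonal entries and $\cos(2\theta)=-\cos(2\theta)$, i.e. $\cos(2\theta)=0$, from the diagonal entries, which is impossible. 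Therefore $n$ must be even.

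Finally, once $n$ is known to be even we have $(\lambda R)^n=\lambda^n\mathbb{I}_2$, so the equation $(\lambda R)^n=\mathbb{I}_2$ is equivalent to $\lambda^n=1$; as $\lambda$ is real and $n$ is a positive even integer, the only solutions are $\lambda=1$ and $\lambda=-1$, which completes the characterization. The computation is elementary throughout, and the only point requiring genuine care is the odd case: there one must rule out $R$ being a scalar matrix rather than merely observing $R\neq\mathbb{I}_2$, but the impossibility of $\sin(2\theta)=\cos(2\theta)=0$ settles this at once.
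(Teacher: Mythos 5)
Your proposal is correct and follows essentially the same route as the paper: compute $R^2=\mathbb{I}_2$, dispose of the ``if'' direction immediately, rule out odd $n$ by showing $R$ cannot be a scalar matrix (the paper likewise derives $\cos(2\theta)=0$ and notes the contradiction with $R^n$ being scalar, since then $\sin(2\theta)=\pm 1$), and conclude $\lambda^n=1$ forces $\lambda=\pm 1$ in the even case. Your write-up is in fact slightly more explicit than the paper's at the one delicate point, namely spelling out why $\sin(2\theta)=\cos(2\theta)=0$ is impossible.
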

\begin{proof}
It is easy to observe that $R^2=\mathbb{I}_2$.  Therefore, if $n$ is even and $\lambda=1$ or $-1$, then $(\lambda R)^n=\lambda^n R^n=\lambda^n \mathbb{I}_2=\mathbb{I}_2$.

Conversely, let $(\lambda R)^n=\mathbb{I}_2$.  Therefore,  
\begin{equation}\label{R^n is identity_converse part}
R^n=\tfrac{1}{\lambda^n}\mathbb{I}_2.
\end{equation} 
Recall that $R=\big(\begin{smallmatrix}
 \cos(2\theta) &  \sin(2\theta)\\
 \sin(2\theta) &  -\cos(2\theta)
\end{smallmatrix}\big)$.  Now if $n$ is odd, then  \eqref{R^n is identity_converse part} implies that $\cos(2\theta)=-\cos(2\theta)$, which in turn says that $\cos(2\theta)=0$.  This contradicts \eqref{R^n is identity_converse part}.  Therefore if $(\lambda R)^n=\mathbb{I}_2$ then $n$ can't be odd.  Hence $n$ is even and moreover by \eqref{R^n is identity_converse part} $\lambda=1$ or $-1$.  
\end{proof}

Now we are in a position to find some exclusive conditions which will force the orbit $\Or_{(a,b)}(T_{\theta}^{\lambda})$ to be finite.  Precisely, we obtain the following. 

\begin{theorem}\label{orbit finiteness_NASC conditions}
Let $\lambda$ be any non-zero real number and let $(a,b)$ be any point of $\mathbb{R}^2$ other than origin.  Then $\Or_{(a,b)}(T_{\theta}^{\lambda})$ is finite if and only if $\lambda=1$ or $-1$.
\end{theorem}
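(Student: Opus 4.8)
The plan is to exploit the involutive and orthogonal nature of the reflection matrix $R$. From the identity $(\lambda R)^n=\lambda^n R^n$ together with $R^2=\mathbb{I}_2$ (already noted in the proof of Proposition \ref{R^n is identity conditions}), the powers of $T_{\theta}^{\lambda}$ reduce to just two shapes according to the parity of $n$: for even $n$ one obtains $\lambda^n\mathbb{I}_2$ and for odd $n$ one obtains $\lambda^n R$. Consequently the orbit $\Or_{(a,b)}(T_{\theta}^{\lambda})$ consists of the scaled copies $\lambda^n(a,b)$ (even $n$) and $\lambda^n R(a,b)$ (odd $n$). Since $R$ is orthogonal it preserves the Euclidean norm, so every orbit point $(T_{\theta}^{\lambda})^n((a,b))$ has norm exactly $|\lambda|^n\,\|(a,b)\|$. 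This single observation is what drives both implications.

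For sufficiency, I would suppose $\lambda=1$ or $\lambda=-1$. Then $\lambda^n\in\{1,-1\}$ for every $n$, so the even powers send $(a,b)$ to $(a,b)$ and the odd powers send it to $\pm R(a,b)$; in either case the orbit is contained in the two-element set $\{(a,b),\,\pm R(a,b)\}$ and is therefore finite. This is a direct substitution and should require essentially no work.

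For necessity I would argue by contraposition: assuming $\lambda\neq 0$ and $|\lambda|\neq 1$, I would show the orbit is infinite. Here the norm computation does the job, since the quantities $|\lambda|^n\,\|(a,b)\|$, $n\in\mathbb{N}$, are pairwise distinct: the hypotheses $|\lambda|>0$ and $|\lambda|\neq 1$ force the sequence $|\lambda|^n$ to be strictly monotone, while $\|(a,b)\|\neq 0$ because $(a,b)$ is not the origin. Distinct norms force distinct points, so the orbit contains infinitely many elements. Equivalently, one may look only at the even-indexed points $(\lambda^2)^k(a,b)$ and observe that finiteness of the orbit would force $\lambda^{2m}=1$ for some $m\geq 1$, whence $\lambda^2=1$ and $\lambda=\pm 1$ over $\mathbb{R}$.

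I do not anticipate a serious obstacle: the content is entirely carried by $R^2=\mathbb{I}_2$ and the orthogonality of $R$. The only points demanding a little care are the two hypotheses that make the statement sharp, namely that $(a,b)\neq(0,0)$ (otherwise the orbit is the single point $(0,0)$ for every $\lambda$) and that $\lambda\neq 0$ (otherwise $T_{\theta}^{\lambda}$ collapses everything to the origin), together with the elementary fact that $|\lambda|=1$ with $\lambda$ real yields $\lambda=\pm 1$ rather than any other unit-modulus scalar.
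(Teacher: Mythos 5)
Your proposal is correct, and in the necessity direction it takes a genuinely cleaner route than the paper. The paper proves sufficiency contrapositively via Proposition \ref{R^n is identity conditions}, and for necessity it extracts a period $p$ with $(\lambda R)^p((a,b))=(a,b)$ and then splits on the parity of $p$; the odd case forces a further sub-case analysis on whether $(a,b)\in L_{\theta}$, resolved by a collinearity contradiction using Remark \ref{points on line of reflection are invariant under reflection}. You instead prove sufficiency directly (the orbit sits inside $\{(a,b),\pm R((a,b))\}$ when $\lambda=\pm 1$) and get necessity in one stroke from the identity $\|(T_{\theta}^{\lambda})^n((a,b))\|=|\lambda|^n\,\|(a,b)\|$, which holds because $R^n$ is orthogonal: when $|\lambda|\notin\{0,1\}$ these norms are pairwise distinct, so the orbit is infinite. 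This is uniform in the parity of $n$, avoids the geometric sub-cases entirely, and makes visible exactly where the two hypotheses $(a,b)\neq(0,0)$ and $\lambda\neq 0$ are used; what the paper's version buys in exchange is that it reuses Proposition \ref{R^n is identity conditions} and stays within the reflection-geometry picture developed earlier in the section. Your fallback argument via the even-indexed subsequence $(\lambda^2)^k(a,b)$ is also valid and is essentially the even-$p$ branch of the paper's case analysis.
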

\begin{proof}
We first show that to get the finiteness of the orbit $\Or_{(a,b)}(T_{\theta}^{\lambda})$ it is sufficient to have $\lambda=1$ or $-1$.  For that, we proceed contra positively.
\begin{equation*}
\begin{split}
\Or_{(a,b)}(T_{\theta}^{\lambda})\; \text{is not finite}&\Rightarrow (\lambda R)^n((a,b))\neq (\lambda R)^m((a,b))\\&\;\text{for all non-negative}\; m \;\&\; n \\
&\Rightarrow (\lambda R)^{n-m}((a,b))\neq (a,b) \;(\text{assuming}\;  n>m\;\text{w.l.o.g})\\
&\Rightarrow (\lambda R)^p\neq \mathbb{I}_2 \;\text{for all even}\; p\\
&\Rightarrow \lambda\neq 1,-1\;(\text{by Proposition}\;\ref{R^n is identity conditions}).
\end{split}
\end{equation*}
We now prove the converse part.  Let $\Or_{(a,b)}(T_{\theta}^{\lambda})$ is finite.  Then
\begin{equation}\label{orbit finiteness_converse part_when point is on line of reflection}
\begin{split}
\Or_{(a,b)}(T_{\theta}^{\lambda})\; \text{is finite}&\Rightarrow (\lambda R)^n((a,b))=(\lambda R)^m((a,b))\\&\;\text{for some non-negative}\; m \;\&\; n \\
&\Rightarrow (\lambda R)^{n-m}((a,b))=(a,b) \;(\text{assuming}\;  n>m\;\text{w.l.o.g})\\
&\Rightarrow (\lambda R)^p((a,b))=(a,b),  \;\text{where}\; n-m=p\;(\text{say}).
\end{split}
\end{equation}
Now if $p$ is even, then by \eqref{orbit finiteness_converse part_when point is on line of reflection} we further have the following implications.
\begin{equation*}\label{orbit finiteness_even case}
\begin{split}
\lambda^p R((a,b))=(a,b)&\Rightarrow \lambda^p \mathbb{I}_2((a,b))=(a,b)\\&\Rightarrow \lambda^p=1\Rightarrow \lambda=1,-1.
\end{split}
\end{equation*}
For $p$ odd, we further consider two mutually exclusive and exhaustive cases.

\textit{First Case - }$(a,b)\in L_{\theta}$ : \\
By \eqref{orbit finiteness_converse part_when point is on line of reflection} we have the following implications.
\begin{equation*}
\begin{split}
\lambda^p R((a,b))=(a,b)&\Rightarrow \lambda^p (a,b)=(a,b)\\& \Rightarrow \lambda^p=1\Rightarrow \lambda=1.
\end{split}
\end{equation*}
\textit{Second Case - }$(a,b)\notin L_{\theta}$ : \\
Let $(a,b)\in L_{\alpha}$ for some real number $\alpha$. As $p$ is odd, by \eqref{orbit finiteness_converse part_when point is on line of reflection}, we have that $(\lambda R)^n((a,b))=(\lambda R)^m((a,b))$, where one of $n$ and $m$ is odd and other is even.  Without loss of generality, take $n$ to be odd and $m$ to be even.  Then we have $\lambda^n R((a,b))=\lambda^m (a,b)$.  Therefore, $R((a,b))$ and $(a,b)$ are collinear, which is a contradiction to the assumption that $(a,b)\notin L_{\theta}$ by Remark \ref{points on line of reflection are invariant under reflection}.  So, this case is not at all feasible.
\end{proof}

\begin{remark}\label{remark on finite orbits}
\begin{enumerate}
\item Theorem \ref{orbit finiteness_NASC conditions} talks about a necessary and sufficient condition for finiteness of the orbit $\Or_{(a,b)}(T_{\theta}^{\lambda})$ for non-zero values of $\lambda$ and for points $(a,b)\neq (0,0)$.  Also, $\Or_{(a,b)}(T_{\theta}^{\lambda})$ is finite if either $(a,b)=(0,0)$ or $\lambda=0$.
\item We now check when $\Or_{(a,b)}(T_{\theta}^{\lambda})$ is either singleton or consists of two elements.  
\begin{enumerate}
\item If $(a,b)=(0,0)$, then $\Or_{(0,0)}(T_{\theta}^{\lambda})=\{(0,0)\}$ for all $\theta$ and $\lambda$.
\item If $(a,b)\neq(0,0)$, then we have the following: 
\begin{enumerate}
\item If $(a,b)\in L_{\theta}$, then $\Or_{(a,b)}(T_{\theta}^{1})=\{(a,b)\}$.
\item If $(a,b)\notin L_{\theta}$, then $\Or_{(a,b)}(T_{\theta}^{1})=\{(a,b),T_{\theta}^{1}(a,b)\}$ and \;\;\;\;  $\Or_{(a,b)}(T_{\theta}^{0})$ $=\{(a,b),(0,0)\}$.
\item $\Or_{(a,b)}(T_{\theta}^{-1})=\{(a,b),T_{\theta}^{-1}(a,b)\}$.
\end{enumerate}
So, we discussed a few cases where the orbit $\Or_{(a,b)}(T_{\theta}^{\lambda})$ is either singleton or consists of two elements only.  It can be easily checked that these are all possible such orbits. 
\end{enumerate}
\end{enumerate}
\end{remark}

We now look at the sequence $\{(T_{\theta}^{\lambda})^n((a,b))\}_{n\geq 0}$.  We want to find some suitable $(a,b)\in \mathbb{R}^2$, $\theta$ and $\lambda$ such that the sequence $\{(T_{\theta}^{\lambda})^n((a,b))\}_{n\geq 0}$ is eventually constant.  At this point, we note that whenever the sequence $\{(T_{\theta}^{\lambda})^n((a,b))\}_{n\geq 0}$ is eventually constant, the orbit $\Or_{(a,b)}(T_{\theta}^{\lambda})$ must be finite.  Therefore, $\lambda$ can have only three values, namely $1$, $-1$ and $0$ unless we take $(a,b)=(0,0)$ (cf. Theorem \ref{orbit finiteness_NASC conditions} and Remark \ref{remark on finite orbits}).  If we take $(a,b)=(0,0)$, then for any $\lambda$ and $\theta$, $\{(T_{\theta}^{\lambda})^n((0,0))\}_{n\geq 0}$ is a constant sequence having all the terms equal to $(0,0)$.  So look at points other than origin.  Let $(a,b)(\neq (0,0))$ lies in the line $L_{\alpha}$ for some $\alpha\in \mathbb{R}$.  Then clearly the sequence $\{(T_{\theta}^{0})^n((a,b))\}_{n\geq 0}$ is eventually constant.  For $\lambda=1$, $\{(T_{\theta}^{1})^n((a,b))\}_{n\geq 0}$ is eventually constant only when $\theta=\alpha$.  Finally $\lambda=-1$ provides us eventually constant sequences only in the most simplest case, that is, when $(a,b)=(0,0)$.  As in the discrete topology on $\mathbb{R}^2$, only convergent sequences are the eventually constant ones, therefore the sequence $\{(T_{\theta}^{\lambda})^n((a,b))\}_{n\geq 0}$ is convergent in discrete topology in the following cases.  The point of convergence is also given accordingly. 
\begin{enumerate}
\item When $(a,b)=(0,0)$, the sequence $\{(T_{\theta}^{\lambda})^n((0,0))\}_{n\geq 0}$ converges to $(0,0)$ in discrete topology, for any $\lambda$ and $\theta$.
\item When $\lambda=0$, the sequence $\{(T_{\theta}^{0})^n((a,b))\}_{n\geq 0}$ converges to $(0,0)$ in discrete topology, for any $(a,b)$ and $\theta$.
\item When $\lambda=1$, the sequence $\{(T_{\theta}^{1})^n((a,b))\}_{n\geq 0}$ converges to $(a,b)$ in discrete topology for any $\theta$ if $(a,b)\in L_{\theta}$.
\end{enumerate}

As discrete topology is finer than usual topology on $\mathbb{R}^2$, the mentioned sequences are convergent in usual topology as well.  We now find out whether there are some more sequences $\{(T_{\theta}^{\lambda})^n((a,b))\}_{n\geq 0}$ that are convergent in usual topology.  As the metric induced by the usual topology on $\mathbb{R}^2$ is a complete metric, to find out the convergent sequences it is enough to find the Cauchy sequences over there.

We observe that for any $(x_0,y_0)\in \mathbb{R}^2$, if for some $n$, $(T_{\theta}^{\lambda})^n((x_0,y_0))\in L_{\alpha}$, then $(T_{\theta}^{\lambda})^{n+1}((x_0,y_0))\in L_{2\theta-\alpha}$.  As the lines $L_{\alpha}$ and $L_{2\theta-\alpha}$ intersect only at origin, the distance between any two consecutive terms of the sequence $\{(T_{\theta}^{\lambda})^n((x_0,y_0))\}_{n\geq 0}$ can be made arbitrarily small, that is, the sequence $\{(T_{\theta}^{\lambda})^n((x_0,y_0))\}_{n\geq 0}$ can be made to be a Cauchy sequence, only when the terms of the sequence approach origin.  So we conclude that the only possible point of convergence of the sequence $\{(T_{\theta}^{\lambda})^n((x_0,y_0))\}_{n\geq 0}$, with infinitely many distinct terms, is $(0,0)$.  We claim that this only possibility can be attained by the sequence $\{(T_{\theta}^{\lambda})^n((x_0,y_0))\}_{n\geq 0}$ for all non-zero $\lambda$ with $|\lambda|< 1$.  

Recall that $(\lambda R)^n={\lambda}^n\mathbb{I}_2$ when $n$ is even and $(\lambda R)^n={\lambda}^n R$ when $n$ is odd.  Using this we calculate some upper bounds of the distances  between terms of the sequence $\{(T_{\theta}^{\lambda})^n((x_0,y_0))\}_{n\geq 0}$.  Here by distance between any two points $x$ and $y$ of $\mathbb{R}^2$, we mean the Euclidean distance between them and denote that by $d(x,y)$. 

\textit{Case 1} : When both $n$ and $m$ are even
\begin{equation}\label{terms can be made closer_both even case}
\begin{split}
d((T_{\theta}^{\lambda})^n((x_0,y_0)),&(T_{\theta}^{\lambda})^m((x_0,y_0)))=d((\lambda R)^n,(\lambda R)^m)\\&=d(({\lambda}^n x_0,{\lambda}^n y_0),({\lambda}^m x_0,{\lambda}^m y_0))\\&\leq d(({\lambda}^n x_0,{\lambda}^n y_0),(0,0))+d((0,0),({\lambda}^m x_0,{\lambda}^m y_0))\\&=(|\lambda|^n+|\lambda|^m)\sqrt{x_0^2+y_0^2}.
\end{split}
\end{equation}

\textit{Case 2} : When both $n$ and $m$ are odd
\begin{equation}\label{terms can be made closer_both odd case}
\begin{split}
d((T_{\theta}&^{\lambda})^n((x_0,y_0)),(T_{\theta}^{\lambda})^m((x_0,y_0)))=d((\lambda R)^n,(\lambda R)^m)\\&=d(({\lambda}^n (x_0\cos(2\theta)+y_0\sin(2\theta)),{\lambda}^n (x_0\sin(2\theta)-y_0\cos(2\theta))),\\&({\lambda}^m (x_0\cos(2\theta)+y_0\sin(2\theta)),{\lambda}^m (x_0\sin(2\theta)-y_0\cos(2\theta))))\\&\leq  d(({\lambda}^n (x_0\cos(2\theta)+y_0\sin(2\theta)),{\lambda}^n (x_0\sin(2\theta)-y_0\cos(2\theta))),(0,0))\\&+d((0,0),({\lambda}^m (x_0\cos(2\theta)+y_0\sin(2\theta)),{\lambda}^m (x_0\sin(2\theta)-y_0\cos(2\theta))))\\&=(|\lambda|^n+|\lambda|^m)\sqrt{x_0^2+y_0^2}.
\end{split}
\end{equation}

\textit{Case 3} : When $n$ is even and $m$ is odd 
\begin{equation}\label{terms can be made closer_even and odd case}
\begin{split}
&d((T_{\theta}^{\lambda})^n((x_0,y_0)),(T_{\theta}^{\lambda})^m((x_0,y_0)))=d((\lambda R)^n,(\lambda R)^m)\\&=d(({\lambda}^n x_0,{\lambda}^n y_0),({\lambda}^m (x_0\cos(2\theta)+y_0\sin(2\theta)),{\lambda}^m (x_0\sin(2\theta)-y_0\cos(2\theta))))\\&\leq d(({\lambda}^n x_0,{\lambda}^n y_0),(0,0))\\&+d((0,0),({\lambda}^m (x_0\cos(2\theta)+y_0\sin(2\theta)),{\lambda}^m (x_0\sin(2\theta)-y_0\cos(2\theta))))\\&=(|\lambda|^n+|\lambda|^m)\sqrt{x_0^2+y_0^2}.
\end{split}
\end{equation}
So given any $\epsilon >0$, by \eqref{terms can be made closer_both even case}, \eqref{terms can be made closer_both odd case} and \eqref{terms can be made closer_even and odd case}, we can choose $n_0\in \mathbb{N}$ such that for all $m,n\geq n_0$, $d((T_{\theta}^{\lambda})^n((x_0,y_0)),(T_{\theta}^{\lambda})^m((x_0,y_0)))\leq (|\lambda|^n+|\lambda|^m)\sqrt{x_0^2+y_0^2}< \epsilon$ whenever $|\lambda|<1$.  Therefore, the sequence $\{(T_{\theta}^{\lambda})^n((x_0,y_0))\}_{n\geq 0}$ is a Cauchy sequence for all $\lambda$ with $|\lambda|<1$ and hence convergent to $(0,0)$ in usual topology as we justified earlier that $(0,0)$ is the only possible point of convergence. 
\begin{figure}
\centering
\includegraphics[scale=.45]{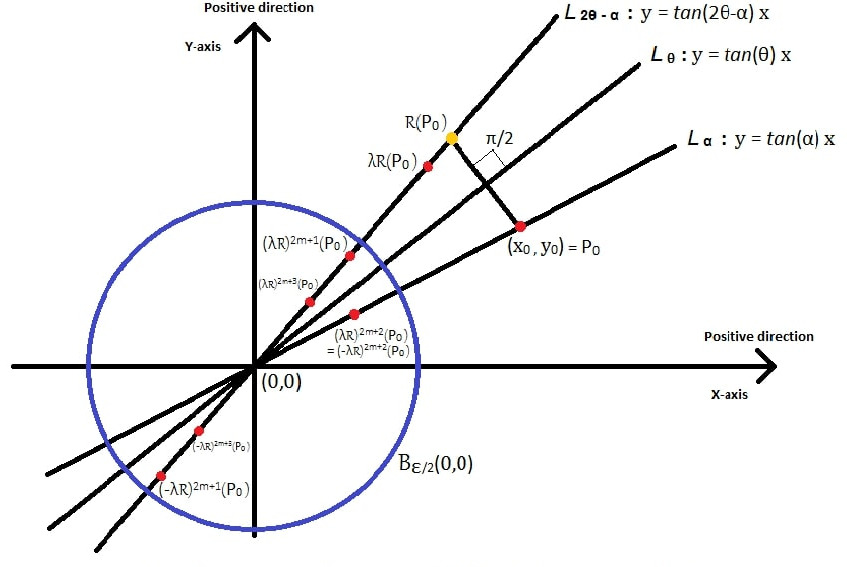}
\caption{Convergence of the sequence $\{(\lambda R)^n(P_0)\}_{n\geq 0}$ for $0<\mid \lambda \mid <1$} \label{sample-figure_2}
\end{figure} 

Also, it can be directly checked that the sequence $\{(T_{\theta}^{\lambda})^n((x_0,y_0))\}_{n\geq 0}$ converges to $(0,0)$ for all $\lambda$ with $|\lambda|<1$.

\textit{Case 1} : When $n$ is even
\begin{equation}\label{distance from origin can be made enough small_even case}
\begin{split}
d((T_{\theta}^{\lambda})^n((x_0,y_0)),(0,0))&=d((\lambda R)^n,(0,0))\\&=d(({\lambda}^n x_0,{\lambda}^n y_0),(0,0))\\&=|\lambda|^n\sqrt{x_0^2+y_0^2}.
\end{split}
\end{equation}

\textit{Case 2} : When $n$ is odd
\begin{equation}\label{distance from origin can be made enough small_odd case}
\begin{split}
d((T_{\theta}^{\lambda})^n&((x_0,y_0)),(0,0))=d((\lambda R)^n,(0,0))\\&=d(({\lambda}^n (x_0\cos(2\theta)+y_0\sin(2\theta)),{\lambda}^n (x_0\sin(2\theta)-y_0\cos(2\theta))),(0,0))\\&=|\lambda|^n\sqrt{x_0^2+y_0^2}.
\end{split}
\end{equation}

So given any $\epsilon >0$, by \eqref{distance from origin can be made enough small_even case} and \eqref{distance from origin can be made enough small_odd case}, we can choose $n_0\in \mathbb{N}$ such that for all $n\geq n_0$, $d((T_{\theta}^{\lambda})^n((x_0,y_0)),(0,0))=|\lambda|^n\sqrt{x_0^2+y_0^2}<\tfrac{\epsilon}{2}$ whenever $|\lambda|<1$.  Therefore, for all $\lambda$ with $|\lambda|<1$, the terms of the sequence $\{(T_{\theta}^{\lambda})^n((x_0,y_0))\}_{n\geq 0}$ eventually lie in the open ball $B_{\tfrac{\epsilon}{2}}(0,0)$ of radius $\tfrac{\epsilon}{2}$ and having centre at $(0,0)$ and hence convergent to $(0,0)$ in usual topology.

We want to summarize what we have discussed so far regarding the convergence of the sequence $\{(T_{\theta}^{\lambda})^n((x_0,y_0))\}_{n\geq 0}$ from a different perspective.  For that we lend some terminologies from dynamical systems and define those solely in our context.  
\begin{definition}
Let $X$ be a finite dimensional vector space over $\mathbb{R}$ and $d$ be a metric on $X$.  Also let $L : X\rightarrow X$ be a linear map.  Then any two points $x_1$, $x_2$ of $X$ are said to be forward asymptomatic with respect to the map $L$ if $d(L^n(x_1),L^n(x_2))\longrightarrow 0$ as $n\longrightarrow \infty$.  For any $x\in X$, the stable set of $x$ with respect to $L$, denoted by $\Stab_L(x)$, is the set of all points forward asymptomatic to $x$ with respect to $L$.
\end{definition}
We now have the following theorem regarding how the stable set of any point of $\mathbb{R}^2$ with respect to any arbitrary element $\lambda R$ of $\Sym_2^0$ changes all of a sudden as $\lambda$ varies from $|\lambda|<1$ to $|\lambda|\geq 1$.
\begin{theorem}\label{stable set}
Consider the metric space $(\mathbb{R}^2,d)$ where $d$ denotes the usual metric.  Then the following hold for any point $(a,b)\in \mathbb{R}^2$:
\begin{enumerate}
\item For any $\lambda$ with $|\lambda|<1$, 
\begin{equation*}
\Stab_{T_{\theta}^{\lambda}}((a,b))=\mathbb{R}^2.
\end{equation*}
\item For any $\lambda$ with $|\lambda|\geq 1$, 
\begin{equation*}
\Stab_{T_{\theta}^{\lambda}}((a,b))=\{(a,b)\}.
\end{equation*}
\end{enumerate}
\end{theorem}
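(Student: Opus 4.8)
The plan is to exploit the linearity of $T_{\theta}^{\lambda}=\lambda R$ so as to reduce the whole statement to a single norm computation. Since $\lambda R$ is a linear map, for any two points $x_1,x_2\in\mathbb{R}^2$ we have $(\lambda R)^n(x_1)-(\lambda R)^n(x_2)=(\lambda R)^n(x_1-x_2)$, and therefore
\begin{equation*}
d\big((\lambda R)^n(x_1),(\lambda R)^n(x_2)\big)=d\big((\lambda R)^n(x_1-x_2),(0,0)\big).
\end{equation*}
Hence $x_1$ and $x_2$ are forward asymptotic with respect to $T_{\theta}^{\lambda}$ if and only if $(\lambda R)^n(x_1-x_2)\to(0,0)$; in particular, whether a point $(c,d)$ lies in $\Stab_{T_{\theta}^{\lambda}}((a,b))$ depends only on the difference $(c,d)-(a,b)$. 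Thus it suffices to determine, for an arbitrary vector $v=(x_0,y_0)$, for which $\lambda$ one has $(\lambda R)^n(v)\to(0,0)$.

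Next I would compute $d\big((\lambda R)^n(v),(0,0)\big)$ explicitly. Recall from the proof of Proposition \ref{R^n is identity conditions} that $R^2=\mathbb{I}_2$, so $(\lambda R)^n=\lambda^n\mathbb{I}_2$ when $n$ is even and $(\lambda R)^n=\lambda^n R$ when $n$ is odd. Since $R\in\mathcal{O}_2$ by Proposition \ref{structure of orthogonal matrices of size 2}, it preserves the Euclidean norm, so in either parity one obtains
\begin{equation*}
d\big((\lambda R)^n(v),(0,0)\big)=|\lambda|^n\sqrt{x_0^2+y_0^2}.
\end{equation*}
This is precisely the content of the two cases already recorded in \eqref{distance from origin can be made enough small_even case} and \eqref{distance from origin can be made enough small_odd case}, so no genuinely new computation is required at this step.

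Finally, both assertions can be read off from this identity. If $|\lambda|<1$, then $|\lambda|^n\to 0$, so $d\big((\lambda R)^n(v),(0,0)\big)\to 0$ for \emph{every} $v$; hence every point of $\mathbb{R}^2$ is forward asymptotic to $(a,b)$ and $\Stab_{T_{\theta}^{\lambda}}((a,b))=\mathbb{R}^2$. If instead $|\lambda|\geq 1$, then $|\lambda|^n\geq 1$ for all $n$, so $|\lambda|^n\sqrt{x_0^2+y_0^2}\to 0$ forces $\sqrt{x_0^2+y_0^2}=0$, that is $v=(0,0)$; hence the only point forward asymptotic to $(a,b)$ is $(a,b)$ itself, giving $\Stab_{T_{\theta}^{\lambda}}((a,b))=\{(a,b)\}$.

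There is no serious obstacle here: the entire argument rests on the single identity $d\big((\lambda R)^n(v),(0,0)\big)=|\lambda|^n\sqrt{x_0^2+y_0^2}$, which follows from the norm-preserving property of the reflection $R$ together with $R^2=\mathbb{I}_2$. The only point deserving a moment's care is the boundary value $|\lambda|=1$, where the distance is the constant $\sqrt{x_0^2+y_0^2}$ rather than a growing quantity; it still fails to tend to $0$ unless $v=(0,0)$, which is exactly why $|\lambda|=1$ belongs with $|\lambda|>1$ in the second assertion rather than with $|\lambda|<1$.
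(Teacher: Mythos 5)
Your proof is correct and rests on exactly the same key identity as the paper's, namely $d\big((\lambda R)^n(x_1),(\lambda R)^n(x_2)\big)=|\lambda|^n\, d(x_1,x_2)$, from which both assertions follow just as in the paper (including the correct placement of the boundary case $|\lambda|=1$ in the second part). The only difference is cosmetic: you obtain this identity conceptually from linearity of $\lambda R$ together with the fact that $R^n$ is orthogonal (hence norm-preserving), whereas the paper verifies it by explicit coordinate computations in the separate cases $n$ odd and $n$ even; your derivation is the cleaner of the two but does not constitute a genuinely different route.
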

\begin{proof}
For any $(c,d)\in \mathbb{R}^2$ and for any odd $n$,
\begin{equation}\label{stable set_odd case}
\begin{split}
&d((T_{\theta}^{\lambda})^n((a,b)),(T_{\theta}^{\lambda})^n((c,d)))\\&=d(({\lambda}^n (a\;\cos(2\theta)+b\;\sin(2\theta)),{\lambda}^n (a\; \sin(2\theta)-b\;\cos(2\theta))),\\&({\lambda}^n (c\;\cos(2\theta)+d\;\sin(2\theta)),{\lambda}^n (c\;\sin(2\theta)-d\;\cos(2\theta))))\\
&=|\lambda|^n\sqrt{((a-c)\cos(2\theta)+(b-d)\sin(2\theta))^2+((a-c)\sin(2\theta)-(b-d)\cos(2\theta))^2}\\&=|\lambda|^n \sqrt{(a-c)^2+(b-d)^2}\\&=|\lambda|^n d((a,b),(c,d)).
\end{split}
\end{equation}
Also for any $(c,d)\in \mathbb{R}^2$ and even $n$,
\begin{equation}\label{stable set_even case}
\begin{split}
d(&(T_{\theta}^{\lambda})^n((a,b)),(T_{\theta}^{\lambda})^n((c,d)))=d((\lambda^n a,\lambda^n b),(\lambda^n c,\lambda^n d))\\&=|\lambda|^n \sqrt{(a-c)^2+(b-d)^2}=|\lambda|^n d((a,b),(c,d)).
\end{split}
\end{equation}
Therefore we have the following as $\lambda$ varies.
\begin{enumerate}
\item For all $(c,d)\in \mathbb{R}^2$, $d((T_{\theta}^{\lambda})^n((a,b)),(T_{\theta}^{\lambda})^n((c,d)))\longrightarrow  0$ as $n\longrightarrow \infty$ by \eqref{stable set_odd case} and \eqref{stable set_even case}, whenever $|\lambda|<1$.  So any point of $\mathbb{R}^2$ is forward asymptomatic to $(a,b)$ and hence $\Stab_{T_{\theta}^{\lambda}}((a,b))=\mathbb{R}^2$.
\item By \eqref{stable set_odd case} and \eqref{stable set_even case}, for $|\lambda| \geq 1$, $d((T_{\theta}^{\lambda})^n((a,b)),(T_{\theta}^{\lambda})^n((c,d)))\longrightarrow 0$ as $n\longrightarrow \infty$ if and only if $d((a,b),(c,d))=0$ if and only if $(a,b)=(c,d)$.  So the only point forward asymptomatic to $(a,b)$ is $(a,b)$ itself, that is to say $\Stab_{T_{\theta}^{\lambda}}((a,b))=\{(a,b)\}$.
\end{enumerate}
\end{proof}
\begin{remark}
First part of Theorem \ref{stable set} can be proved alternatively as follows. We first claim that $\Stab_{T_{\theta}^{\lambda}}((0,0))=\mathbb{R}^2$ for any $\lambda$ with $|\lambda|<1$.  When $\lambda=0$, it is easy to see that any point of $\mathbb{R}^2$ is forward asymptomatic to $(0,0)$ with respect to $T_{\theta}^{\lambda}$.  Also, the same is true for any $\lambda$ with $0<|\lambda|<1$ by \eqref{distance from origin can be made enough small_even case} and \eqref{distance from origin can be made enough small_odd case}.  Hence we have the claim.  Now for any point $(a,b)\neq (0,0)$,
\begin{equation*}
\begin{split}
d(&(T_{\theta}^{\lambda})^n((a,b)),(T_{\theta}^{\lambda})^n((c,d)))\\
&\leq d((T_{\theta}^{\lambda})^n((a,b)),(T_{\theta}^{\lambda})^n((0,0)))+ d((T_{\theta}^{\lambda})^n((0,0)),(T_{\theta}^{\lambda})^n((c,d)))\\&=d((T_{\theta}^{\lambda})^n((a,b)),(0,0))+ d((0,0),(T_{\theta}^{\lambda})^n((c,d)))
\end{split}
\end{equation*}
Therefore for all $(c,d)\in \mathbb{R}^2$, $d((T_{\theta}^{\lambda})^n((a,b)),(T_{\theta}^{\lambda})^n((c,d)))\longrightarrow  0$ as $n\longrightarrow \infty$ by \eqref{distance from origin can be made enough small_even case} and \eqref{distance from origin can be made enough small_odd case}.  So any point of $\mathbb{R}^2$ is forward asymptomatic to $(a,b)$ and hence $\Stab_{T_{\theta}^{\lambda}}((a,b))=\mathbb{R}^2$.
\end{remark}
\section{Applications of the structure of $\S\lowercase{ym}_2^0$}\label{section 3}
In this section we talk about a couple of applications of the obtained structure of $\Sym_2^0$.  In the first subsection we obtain the structure of the set $\PSym_2^0$.  In the next subsection we reinterpret some trigonometric formulas and show how those can be used to solve purely geometric questions. 
\subsection{The structure of $\PSym_2^0$}\label{subsection 3.1} 
Recall that $\PSym^0_n$ is the set of all $n\times n$ symmetric matrices, trace of whose product with any element of $\Sym^0_n$ is zero.  In this subsection we obtain the structure of $\PSym^0_2$ in the form of following theorem.
\begin{theorem}\label{main theorem_special case}
The set $\PSym^0_2$ is the set of all scalar matrices of size $2$.
\end{theorem}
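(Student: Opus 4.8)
The plan is to unpack the defining condition of $\PSym_2^0$ directly and feed in the explicit parametrization of $\Sym_2^0$ furnished by Theorem \ref{structure of Sym^0_2}. First I would take an arbitrary symmetric matrix $B=\big(\begin{smallmatrix} p & q \\ q & r \end{smallmatrix}\big)$ together with an arbitrary element $A=\big(\begin{smallmatrix} \lambda\cos(\theta) & \lambda\sin(\theta) \\ \lambda\sin(\theta) & -\lambda\cos(\theta) \end{smallmatrix}\big)$ of $\Sym_2^0$, and compute $BA$. A one-line matrix multiplication gives $\Tr(BA)=\lambda\big((p-r)\cos(\theta)+2q\sin(\theta)\big)$.

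The crux is then to read off what $B\in\PSym_2^0$ demands: the quantity above must vanish for every $\lambda\in\mathbb{R}$ and every $\theta\in[0,2\pi)$. Factoring out $\lambda$, this is equivalent to $(p-r)\cos(\theta)+2q\sin(\theta)=0$ holding for all $\theta$. Specializing to $\theta=0$ forces $p=r$, while specializing to $\theta=\tfrac{\pi}{2}$ forces $q=0$. Hence $B=p\,\mathbb{I}_2$ is scalar, which establishes that $\PSym_2^0$ is contained in the set of scalar matrices of size $2$.

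For the reverse containment I would simply verify that any scalar matrix qualifies: if $B=c\,\mathbb{I}_2$ then $\Tr(BA)=c\,\Tr(A)=0$ for every $A\in\Sym_2^0$, because such $A$ has trace zero and the trace is linear. Putting the two inclusions together yields the stated equality.

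I do not expect a genuine obstacle, since the proof is a short linear-algebra verification. The only subtlety worth flagging is the universal quantifier in the definition of $\PSym_2^0$: the condition nominally ranges over the whole uncountable family $\Sym_2^0$, so I must justify that the two sample angles $\theta=0$ and $\theta=\tfrac{\pi}{2}$ already determine $B$. This is legitimate because $\Sym_2^0$ is the two-dimensional space spanned by $\big(\begin{smallmatrix} 1 & 0 \\ 0 & -1 \end{smallmatrix}\big)$ and $\big(\begin{smallmatrix} 0 & 1 \\ 1 & 0 \end{smallmatrix}\big)$, and the pairing $A\mapsto\Tr(BA)$ is linear, so vanishing on these two generators forces vanishing on all of $\Sym_2^0$; the two chosen angles recover exactly these generators up to the harmless scalar $\lambda$.
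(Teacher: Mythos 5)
Your proposal is correct and follows essentially the same route as the paper: compute $\Tr(BA)$ for an arbitrary symmetric $B$ against the parametrized elements of $\Sym_2^0$ from Theorem \ref{structure of Sym^0_2}, then specialize to $\theta=0$ and $\theta=\tfrac{\pi}{2}$ to force the diagonal entries equal and the off-diagonal entry zero. Your explicit verification of the reverse inclusion (that scalar matrices do lie in $\PSym_2^0$) and the remark on linearity justifying the two sample angles are small additions the paper leaves implicit, but they do not change the argument.
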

\begin{proof}
Let $A=\big(\begin{smallmatrix}a & b\\b & c \end{smallmatrix}\big)\in \PSym^0_2$ be arbitrarily chosen.  Then $\Tr(B A)=0$ for all $B\in \Sym^0_2$.  Therefore by Theorem \ref{structure of Sym^0_2}, we have:
\begin{equation*}
\Tr(\big(\begin{smallmatrix}\gamma\;\cos (\theta) & \gamma\;\sin (\theta)\\
\gamma\;\sin (\theta) & -\gamma\;\cos (\theta) \end{smallmatrix}\big) \big(\begin{smallmatrix}a & b\\b & c \end{smallmatrix}\big))=0,
\end{equation*}
for all $\gamma\in \mathbb{R}$ and $\theta\in [0,2\pi)$.  That is, for all $\gamma\in \mathbb{R}$ and $\theta\in [0,2\pi)$, 
\begin{equation}\label{for plugging values to prove the main theorem for n=2}
\gamma(a\;\cos(\theta)+2b\;\sin(\theta)-c\;\cos(\theta))=0.
\end{equation}
Plugging $\gamma=1$ and $\theta=0$ in \eqref{for plugging values to prove the main theorem for n=2}, we get $a=c$.  Similarly, plugging $\gamma=1$ and $\theta=\tfrac{\pi}{2}$ in \eqref{for plugging values to prove the main theorem for n=2}, we get $b=0$.  Hence, the assertion follows.
\end{proof} 
In fact, we have the following more general result.  This is motivated by \cite[Problem 9, Exercises VI, \S 2, p.190]{La}. 
\begin{theorem}\label{main theorem_with more prerequisite}
For any positive integer $n\geq 2$, $\PSym^0_n$ is the set of all scalar matrices of size $n$.
\end{theorem}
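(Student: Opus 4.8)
The plan is to recognize $\PSym^0_n$ as an orthogonal complement and then pin it down by a dimension count. First I would equip the vector space $\Sym_n$ with the Frobenius inner product $\langle A,B\rangle := \Tr(AB)$. Since any $A\in\Sym_n$ satisfies $A^{T}=A$, this coincides with the usual Frobenius pairing $\Tr(A^{T}B)=\sum_{i,j}a_{ij}b_{ij}$, so it is symmetric, bilinear, and positive definite on $\Sym_n$ (indeed $\langle A,A\rangle=\sum_{i,j}a_{ij}^{2}$, which vanishes only for $A=\mathbb{O}_n$). With this identification, the defining condition ``$\Tr(BA)=0$ for all $B\in\Sym^0_n$'' says exactly that $A$ is orthogonal to every element of $\Sym^0_n$; that is,
\[
\PSym^0_n=(\Sym^0_n)^{\perp},
\]
the orthogonal complement of $\Sym^0_n$ taken inside the inner product space $\Sym_n$.

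Next I would compute the relevant dimensions. The trace is a nonzero linear functional on $\Sym_n$ (for instance $\Tr(\mathbb{I}_n)=n\neq 0$), and $\Sym^0_n$ is precisely its kernel; hence $\Sym^0_n$ is a hyperplane, so $\dim\Sym^0_n=\dim\Sym_n-1$. Because the Frobenius form is nondegenerate, the standard identity $\dim W+\dim W^{\perp}=\dim V$ for a subspace $W$ of a finite-dimensional inner product space $V$ applies, and it yields
\[
\dim\PSym^0_n=\dim(\Sym^0_n)^{\perp}=1.
\]

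It then remains only to exhibit a concrete one-dimensional subspace sitting inside $\PSym^0_n$, and the scalar matrices do exactly this: for any $\lambda\in\mathbb{R}$ and any $B\in\Sym^0_n$ we have $\Tr\big((\lambda\mathbb{I}_n)B\big)=\lambda\,\Tr(B)=0$, so $\lambda\mathbb{I}_n\in\PSym^0_n$. The collection $\{\lambda\mathbb{I}_n:\lambda\in\mathbb{R}\}$ is a one-dimensional subspace of the one-dimensional space $\PSym^0_n$, and therefore equals it. This gives the theorem, and recovers Theorem \ref{main theorem_special case} as the case $n=2$ (where $\dim\Sym_2=3$, matching the explicit computation there).

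The only genuinely substantive points, as opposed to routine verifications, are the observation that $\Tr(AB)$ restricts to an honest positive definite inner product on $\Sym_n$ (this is what licenses the use of $\dim W+\dim W^{\perp}=\dim V$) and the recognition, encoded in the relation $\Tr(B)=\langle\mathbb{I}_n,B\rangle$, that $\mathbb{I}_n$ is the Riesz representative of the trace functional, which is what singles out the scalar matrices as the natural candidate for the complement. I do not expect any real obstacle beyond stating these two facts cleanly.
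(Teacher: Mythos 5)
Your proposal is correct and follows essentially the same route as the paper's own proof: identify $\PSym^0_n$ with the orthogonal complement of $\Sym^0_n$ under the Frobenius inner product, note that $\Sym^0_n$ has codimension $1$, and observe that the scalar matrices already fill out the resulting one-dimensional complement. You merely spell out the supporting verifications (positive definiteness of the form and the dimension identity) that the paper leaves implicit.
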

\begin{proof}
Consider the Frobenius inner product $\langle , \rangle$ on $\Sym_n$ given by $\langle A, B \rangle = \Tr(AB)$, for any $A, B \in \Sym_n$.  Then $\PSym^0_n$ is nothing but the orthogonal complement $({\Sym_n^0})^{\perp}$ of $\Sym_n^0$ with respect to the Frobenius inner product.  Now as $\Sym_n^0$ is a subspace of $\Sym_n$ of co-dimension $1$ and as $\mathbb{I}_n\in \PSym^0_n$, we have the theorem.   
\end{proof}
So, the proof of Theorem \ref{main theorem_special case} incorporates the obtained structure of $\Sym_2^0$ and provides an alternative way of proving Theorem \ref{main theorem_with more prerequisite} for $n=2$. 
\subsection{Interpreting some trigonometric formulas}\label{subsection 3.2}
In this subsection, we prove an interesting geometric property of $\mathbb{R}^2$ using the geometric interpretation of the elements of $\Sym_2^0$ (cf. \eqref{finally_geometry of Sym_2^0}).  Precisely, we prove the following : 
\begin{theorem}\label{reinterpreting some trigonometric formulas}
Given any real number $\beta\in [0,\pi)$, let $L_{\beta}$ denotes the line passing through origin and making an angle $\beta$ (in the anticlockwise direction) with positive direction of $x$ axis.  Then, given any point $(x_0,y_0)$ of $\mathbb{R}^2$, rotating the point of reflection of $(x_0,y_0)$ clockwise (respectively anticlockwise) with respect to the line $L_{\theta/2}$ by an angle $\alpha$ is same as reflecting $(x_0,y_0)$ with respect to the line $L_{(\theta-\alpha)/2}$ (respectively $L_{(\theta+\alpha)/2}$).  
\end{theorem}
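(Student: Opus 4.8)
The plan is to convert the whole geometric assertion into a pair of matrix identities and then verify them by direct multiplication, with the angle-addition formulas doing all the work. By the matrix description of reflection recorded in \eqref{reflection_as elements of Sym_2^0}, reflecting a point across the line $L_{\theta/2}$ is effected by the matrix
\[
\mathcal{R}_{\theta/2}=\begin{pmatrix}\cos(\theta) & \sin(\theta)\\ \sin(\theta) & -\cos(\theta)\end{pmatrix},
\]
while rotating a point clockwise (respectively anticlockwise) through the angle $\alpha$ is effected by
\[
\begin{pmatrix}\cos(\alpha) & \sin(\alpha)\\ -\sin(\alpha) & \cos(\alpha)\end{pmatrix}
\qquad\text{respectively}\qquad
\begin{pmatrix}\cos(\alpha) & -\sin(\alpha)\\ \sin(\alpha) & \cos(\alpha)\end{pmatrix}.
\]
Since the statement asserts an equality of images valid for every $(x_0,y_0)\in\mathbb{R}^2$, treating points as column vectors it is equivalent to the two matrix identities
\[
\begin{pmatrix}\cos(\alpha) & \sin(\alpha)\\ -\sin(\alpha) & \cos(\alpha)\end{pmatrix}\mathcal{R}_{\theta/2}=\mathcal{R}_{(\theta-\alpha)/2},
\qquad
\begin{pmatrix}\cos(\alpha) & -\sin(\alpha)\\ \sin(\alpha) & \cos(\alpha)\end{pmatrix}\mathcal{R}_{\theta/2}=\mathcal{R}_{(\theta+\alpha)/2}.
\]

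First I would compute the left-hand product in the clockwise identity. Collecting the four entries, the diagonal entries both produce $\cos(\alpha)\cos(\theta)+\sin(\alpha)\sin(\theta)=\cos(\theta-\alpha)$ (with a sign on the $(2,2)$ slot), and the two off-diagonal entries both produce $\cos(\alpha)\sin(\theta)-\sin(\alpha)\cos(\theta)=\sin(\theta-\alpha)$. Hence the product is
\[
\begin{pmatrix}\cos(\theta-\alpha) & \sin(\theta-\alpha)\\ \sin(\theta-\alpha) & -\cos(\theta-\alpha)\end{pmatrix},
\]
which is exactly $\mathcal{R}_{(\theta-\alpha)/2}$ by the definition of $\mathcal{R}_{\phi}$. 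The anticlockwise case is identical save for the sign of the off-diagonal entries of the rotation matrix, so the angle-sum formulas $\cos(\theta+\alpha)=\cos(\alpha)\cos(\theta)-\sin(\alpha)\sin(\theta)$ and $\sin(\theta+\alpha)=\cos(\alpha)\sin(\theta)+\sin(\alpha)\cos(\theta)$ surface in place of the difference formulas, and the product equals $\mathcal{R}_{(\theta+\alpha)/2}$.

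I do not expect a genuine obstacle: once phrased as matrices the computation is mechanical, and the entire content of the theorem is precisely a geometric reading of the sine and cosine addition and subtraction identities. The only step needing care is fixing the orientation conventions consistently, namely that the clockwise rotation is the one carrying $-\sin(\alpha)$ into the lower-left corner, and that $\mathcal{R}_{\phi}$ reflects across $L_{\phi}$ (so that $\mathcal{R}_{\theta/2}$ has the doubled angle $\theta$ in its entries). Swapping either convention would interchange the roles of $\theta-\alpha$ and $\theta+\alpha$, so I would pin both down against \eqref{reflection_as elements of Sym_2^0} before running the multiplication.
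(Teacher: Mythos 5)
Your proposal is correct and follows essentially the same route as the paper: both reduce the statement to the matrix identities $O^1_{\pm\alpha}\cdot \mathcal{R}_{\theta/2}=\mathcal{R}_{(\theta\mp\alpha)/2}$ and verify them by direct multiplication using the sine and cosine addition/subtraction formulas. Your closing remark about pinning down the orientation and angle-doubling conventions against \eqref{reflection_as elements of Sym_2^0} is exactly the right point of care.
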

\begin{proof}
We interpret a point $(x_0,y_0)$ of $\mathbb{R}^2$ as the $2\times 1$ column vector $\big(\begin{smallmatrix}x_0 \\y_0 \end{smallmatrix}\big)$.  From \eqref{geometric definition of elements of Sym_0^2}, \eqref{finally_geometry of Sym_2^0} and Theorem \ref{structure of Sym^0_2}, we have that an element $\big(\begin{smallmatrix} \cos(\theta) & \sin(\theta)\\
\sin(\theta) & -\cos(\theta) \end{smallmatrix}\big)$ of $\Sym^0_2$, denoted by $R_{\theta /2}$, reflects a point $(x_0,y_0)$ with respect to the line $L_{\theta /2}$.  An orthogonal matrix $\big(\begin{smallmatrix} \cos(\alpha) & \sin(\alpha)\\
-\sin(\alpha) & \cos(\alpha) \end{smallmatrix}\big)$ of determinant $1$, denoted by $O^1_{\alpha}$, represents clockwise rotation of the plane $\mathbb{R}^2$.  Similarly, the matrix $O^1_{-\alpha}$ represents anticlockwise rotation of the plane $\mathbb{R}^2$  (cf. \cite[p.348]{He}).  Hence, to prove the theorem, we need to show that $O^1_{\alpha}\cdot R_{\theta}\big(\begin{smallmatrix}x_0 \\y_0 \end{smallmatrix}\big)=R_{\theta-\alpha}\big(\begin{smallmatrix}x_0 \\y_0 \end{smallmatrix}\big)$ for clockwise case and $O^1_{-\alpha}\cdot R_{\theta}\big(\begin{smallmatrix}x_0 \\y_0 \end{smallmatrix}\big)=R_{\theta+\alpha}\big(\begin{smallmatrix}x_0 \\y_0 \end{smallmatrix}\big)$ for anticlockwise case.  Now,
\begin{equation}\label{geometry of trigonometric equations_1}
\begin{split}
O^1_{\alpha}\cdot R_{\theta}=&
\begin{pmatrix}
\cos(\alpha) & \sin(\alpha)\\
-\sin(\alpha) & \cos(\alpha)
\end{pmatrix}\begin{pmatrix}
\cos(\theta) & \sin(\theta)\\
\sin(\theta) & -\cos(\theta)
\end{pmatrix}\\&=\begin{pmatrix}
\cos(\alpha)\cos(\theta)+\sin(\alpha)\sin(\theta) & \cos(\alpha)\sin(\theta)-\sin(\alpha)\cos(\theta)\\
\cos(\alpha)\sin(\theta)-\sin(\alpha)\cos(\theta) & -\cos(\alpha)\cos(\theta)-\sin(\alpha)\sin(\theta)
\end{pmatrix}\\&=\begin{pmatrix}
\cos(\theta-\alpha) & \sin(\theta-\alpha)\\
\sin(\theta-\alpha) & -\cos(\theta-\alpha)
\end{pmatrix}=R_{\theta-\alpha}.
\end{split}
\end{equation}
Similarly we have :
\begin{equation}\label{geometry of trigonometric equations_2}
O^1_{-\alpha}\cdot R_{\theta}=R_{\theta+\alpha}.
\end{equation}
The theorem now follows from \eqref{geometry of trigonometric equations_1} and \eqref{geometry of trigonometric equations_2} for clockwise and anticlockwise scenario respectively. 
\end{proof}

\begin{remark}\label{trigonometry to geometry}
Theorem \ref{reinterpreting some trigonometric formulas} talks about a geometric property of the plane $\mathbb{R}^2$ in both clockwise and anticlockwise context.  Let's denote that property by $P\circlearrowright$ for clockwise case and by $P\circlearrowleft$ for anticlockwise case.  Now consider the following standard trigonometric equalities :
\begin{equation}\label{trigeqn1}
\begin{split}
&\cos(\theta+\alpha)=\cos(\alpha)\cos(\theta)-\sin(\alpha)\sin(\theta),\\& \sin(\theta+\alpha)= \cos(\alpha)\sin(\theta)+\sin(\alpha)\cos(\theta).
\end{split} 
\end{equation}
\begin{equation}\label{trigeqn2}
\begin{split}
&\cos(\theta-\alpha)=\cos(\alpha)\cos(\theta)+\sin(\alpha)\sin(\theta),\\
& \sin(\theta-\alpha)= \cos(\alpha)\sin(\theta)-\sin(\alpha)\cos(\theta).
\end{split}
\end{equation}
So, Theorem \ref{reinterpreting some trigonometric formulas} says that $P\circlearrowright$ can be thought of as a geometric interpretation of the equalities in \eqref{trigeqn2} (cf. \eqref{geometry of trigonometric equations_1}).  Similarly, deciphering \eqref{trigeqn1} geometrically, we obtain $P\circlearrowleft$ (cf. \eqref{geometry of trigonometric equations_2}). 
\end{remark}

\section{Conclusion}\label{section 4}
It is worth mentioning that the structure of some subsets of $\Sym^0_n$ can be obtained for $n>2$ as well by adapting the method used in Theorem \ref{structure of Sym^0_2}, provided some conditions being suitably put on the set of eigenvalues (other than they add upto zero) of all its elements.
\section{Acknowledgements}
The author wishes to thank Indian Institute of Science Education and Research Tirupati (Award No. - IISER-T/Offer/PDRF/A.M./M/01/2021) for financial support.  The author is grateful to the referee for several valuable comments.   
 
\end{document}